\newcommand{\qdn}{\hspace*{-1.5mm}}
\newcommand{\qqdn}{\hspace*{-2.5mm}}
\newcommand{\xqdn}{\hspace*{-5.0mm}}
\newcommand{\xxqdn}{\hspace*{-10mm}}
\newcommand{\fns}{\footnotesize}
\newcommand{\fnk}[3]{\left[\qdn\ba{#1}#2\\#3\ea\qdn\right]}
\newcommand{\ffnk}[4]{\left[\qdn\ba{#1}#3\\#4\ea{\!\Big|\:#2}\right]}
\newcommand{\be}{\begin{equation}}
\newcommand{\ee}{\end{equation}}
\newcommand{\ba}{\begin{array}}
\newcommand{\ea}{\end{array}}
\newcommand{\bmn}{\begin{eqnarray}}
\newcommand{\emn}{\end{eqnarray}}
\newcommand{\bnm}{\begin{eqnarray*}}
\newcommand{\enm}{\end{eqnarray*}}
\newcommand{\bln}{\begin{subequations}}
\newcommand{\eln}{\end{subequations}}
\newtheorem{thm}{Theorem}
\newtheorem{exam}{Example}
\newtheorem{entry}{Entry}
\newcommand{\bbtm}[4]{\bibitem{kn:#1}{#2,}~{#3,}~{#4.}}
\newcommand{\cito}[1]{\cite{kn:#1}}
\newcommand{\citu}[2]{\cite[#2]{kn:#1}}
\begin{document} 
{\fns
\title{Generalizations of Andrews' curious identities}

\author{Chuanan Wei$^a$, Xiaoxia Wang$^b$}
\dedicatory{
$^A$Department of Information Technology\\
  Hainan Medical College, Haikou 571199, China\\
  $^B$Department of Mathematics\\
  Shanghai University, Shanghai 200444, China}
\thanks{\emph{Email addresses}:
      weichuanan78@163.com (C. Wei), xwang 913@126.com (X. Wang)}

\address{ }
\footnote{\emph{2010 Mathematics Subject Classification}: Primary
05A19 and Secondary 33C20.}

\keywords{Hypergeometric series; $q$-Series; Series rearrangement}

\begin{abstract}
According to the method of series rearrangement, we establish two
generalizations of Andrews' curious $q$-series identity with an
extra integer parameter. The limiting cases of them produce two
extensions of Andrews' curious $_3F_2(\frac{3}{4})$-series identity
with an additional integer parameter. Meanwhile, several related
results are also given.
\end{abstract}

\maketitle\thispagestyle{empty}
\markboth{Chuanan Wei, Xiaoxia Wang}
         {Generalizations of Andrews' curious identities}

\section{Introduction}
For a complex variable $x$, define the shifted-factorial by
 \bnm
 (x)_n=
\begin{cases}
\prod_{k=0}^{n-1}(x+k),&n>0;\\
1,&n=0;\\
 \frac{(-1)^n}{\prod_{k=1}^{-n}(k-x)},&n<0.
\end{cases}
 \enm
 For simplifying the expressions, we shall use the symbol:
\[\qqdn\qdn\fnk{ccccc}{a,&b,&\cdots,&c}{\alpha,&\beta,&\cdots,&\gamma}_n
=\frac{(a)_n(b)_n\cdots(c)_n}{(\alpha)_n(\beta)_n\cdots(\gamma)_n}.\]
 Following Bailey~\cito{bailey}, define the hypergeometric series by
\[_{1+r}F_s\ffnk{cccc}{z}{a_0,&a_1,&\cdots,&a_r}{&b_1,&\cdots,&b_s}
 \:=\:\sum_{k=0}^\infty
\frac{(a_{0})_{k}(a_{1})_{k}\cdots(a_{r})_{k}}
 {k!(b_{1})_{k}\cdots(b_{s})_{k}}z^k,\]
where $\{a_{i}\}_{i\geq0}$ and $\{b_{j}\}_{j\geq1}$ are complex
parameters such that no zero factors appear in the denominators of
the summand on the right hand side. Then the curious
$_3F_2(\frac{3}{4})$-series identity due to Andrews
\citu{andrews-a}{Equation (22b)} reads as
 \bmn\label{andrews}
\qquad_3F_2\ffnk{cccc}{\frac{3}{4}}{-n,&a,&3a+n}{&\frac{3a}{2},&\frac{1+3a}{2}}
=\fnk{ccccc}{\frac{1}{3},&\frac{2}{3}}{\frac{1}{3}+a,&\frac{2}{3}+a}_{\frac{n}{3}}\chi(n),
 \emn
where the symbol $\chi(n)$ has been offered by
 \bnm
 \!\xxqdn\chi(n)=\begin{cases}
1,& n=3m;
\\[2mm]
0,&\text{otherwise}.
\end{cases}
\enm
 The reversal of it is the $_3F_2(\frac{4}{3})$-series identity:
 \bmn\label{andrews-reversal}
_3F_2\ffnk{cccc}{\frac{4}{3}}{-n,&\frac{3b}{2},&\frac{1+3b}{2}}{&3b,&b-\frac{n-2}{3}}
=\fnk{ccccc}{\frac{1}{3},&\frac{2}{3}}{\frac{1}{3}-b,&\frac{2}{3}+b}_{\frac{n}{3}}\chi(n).
 \emn
Some recent research work for hypergeometric series should be
mentioned. In accordance with inversion techniques, Chen and Chu
\cite{kn:chen-a,kn:chen-b} derived many $_3F_2(\frac{4}{3})$-series
identities related to \eqref{andrews-reversal}. According to series
rearrangement, Chu \cito{chu}, Lavoie \cito{lavoie-a} and Lavoie et.
al. \cito{lavoie-b} explored $_3F_2(1)$-series identities.

 For two complex numbers $x$ and
$q$, define the $q$-shifted factorial by
  \bnm
 (x;q)_n=
\begin{cases}
\prod_{i=0}^{n-1}(1-xq^i),&n>0;\\
\quad1,&n=0;\\
\frac{1}{\prod_{j=n}^{-1}(1-xq^j)},&n<0.
\end{cases}
\enm
 The fraction form of it reads as
\[\qqdn\qdn\ffnk{ccccc}{q}{a,&b,&\cdots,&c}{\alpha,&\beta,&\cdots,&\gamma}_n
=\frac{(a;q)_n(b;q)_n\cdots(c;q)_n}{(\alpha;q)_n(\beta;q)_n\cdots(\gamma;q)_n}.\]
 Following Gasper and Rahman \cito{gasper}, define the $q$-series by
\[_{1+r}\phi_s\ffnk{cccc}{q;z}{a_0,&a_1,&\cdots,&a_r}
{&b_1,&\cdots,&b_s}
 =\sum_{k=0}^\infty
\ffnk{ccccc}{q}{a_0,&a_1,&\cdots,&a_r}{q,&b_1,&\cdots,&b_s}_kz^k,\]
where $\{a_i\}_{i\geq0}$ and $\{b_j\}_{j\geq1}$ are complex
parameters such that no zero factors appear in the denominators of
the summand on the right hand side. Then the terminating
$_6\phi_5$-series identity (cf. Gasper and Rahman \citu{gasper}{p.
42}) can be expressed as
 \bmn\label{terminating-65}
\:\:\qquad {_6\phi_5}\ffnk{cccccccc}{q;\frac{q^{1+\ell}a}{bc}}
{a,\:q\sqrt{a},\:-q\sqrt{a},\:b,\:c,\:q^{-\ell}}
 {\sqrt{a},\:-\sqrt{a},\:qa/b,\:qa/c,\:q^{1+\ell}a}
 =\ffnk{ccccc}{q}{qa,qa/bc}{qa/b,qa/c}_\ell.
 \emn
Recall a curious $q$-series identity due to Andrews
\citu{andrews-b}{Equation (4.7)}:
  \bmn\qquad
\label{q-andrews}
\sum_{k=0}^n(a;q^3)_k\ffnk{ccccc}{q}{q^{-n},q^{n}a}
{q,\sqrt{a},-\sqrt{a},\sqrt{qa},-\sqrt{qa}}_kq^k=a^{n/3}\ffnk{ccccc}{q^3}{q,q^2}{qa,q^2a}_{\frac{n}{3}}\chi(n).
 \emn
The reversal of it can be stated as
 \bmn\qquad\quad\:
\label{q-andrews-reversal}
\sum_{k=0}^n\frac{1}{(q^{2-n}b;q^3)_k}\ffnk{ccccc}{q}{q^{-n},\sqrt{b},-\sqrt{b},\sqrt{qb},-\sqrt{qb}}
{q,b}_kq^k=\ffnk{ccccc}{q^3}{q,q^2}{q/b,q^2b}_{\frac{n}{3}}\chi(n).
 \emn

 Inspired by the work of \cito{chen-a}-\cito{lavoie-b}, we shall establish
 two generalizations of \eqref{q-andrews}, which involve two extensions of
 \eqref{andrews}, by means of series rearrangement in section 2. The
 reversal of them creates two generalizations of \eqref{q-andrews-reversal}, which involve two extensions of
 \eqref{andrews-reversal}, in section 3.

\section{Generalizations of Andrews' curious identities}

\begin{thm}\label{thm-a}
For a nonnegative integer $\ell$ and a complex number $a$, there
holds
 \bnm
&&\xqdn\sum_{k=0}^n(a;q^3)_k\ffnk{ccccc}{q}{q^{-n},q^{n-\ell}a}
{q,\sqrt{a},-\sqrt{a},\sqrt{qa},-\sqrt{qa}}_kq^k=\ffnk{ccccc}{q}{q^{n-\ell}a}{q^{2n-\ell}a}_{\ell}
\\&&\xqdn\:\:\times\:\sum_{i=0}^{\ell}a^{\frac{n+2i}{3}}
q^{(\ell+2n-i)i}\frac{a-q^{2i-2n}}{a-q^{-2n}}
\ffnk{ccccc}{q}{q^{-\ell},q^{-n},q^{-2n}/a}
{q,q^{1+\ell-2n}/a,q^{n-i}a}_i
\\&&\xqdn\:\:\times\:
\ffnk{ccccc}{q^3}{q,q^2}{qa,q^2a}_{\frac{n-i}{3}}\chi(n-i).
 \enm
\end{thm}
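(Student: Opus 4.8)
The plan is to argue in reverse, starting from the right-hand side and feeding in the known evaluation \eqref{q-andrews}. The key observation is that, for each $i$, the product of $a^{(n+2i)/3}$ with the last factor (the one carrying $\chi(n-i)$) equals $a^i$ times the right-hand side of \eqref{q-andrews} with $n$ replaced by $n-i$, since $a^{(n+2i)/3}=a^i\,a^{(n-i)/3}$. First I would replace that closed form by the defining sum of \eqref{q-andrews}, whose summand carries $(a;q^3)_k(q^{-(n-i)};q)_k(q^{n-i}a;q)_k q^k$ divided by $(q;q)_k(\sqrt a;q)_k(-\sqrt a;q)_k(\sqrt{qa};q)_k(-\sqrt{qa};q)_k$, the latter collapsing to $(q;q)_k(a;q)_{2k}$ because $(\sqrt a;q)_k(-\sqrt a;q)_k=(a;q^2)_k$ and $(a;q^2)_k(qa;q^2)_k=(a;q)_{2k}$. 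This turns the right-hand side into a double sum in $i$ and $k$.

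Next I would interchange the two summations. Since $(q^{-(n-i)};q)_k$ vanishes whenever $k>n-i$, each inner $k$-sum may be extended to the common range $0\le k\le n$ without changing its value; likewise $(q^{-n};q)_i$ kills every term with $i>n$, so \eqref{q-andrews} is only ever invoked at a nonnegative order $n-i\ge0$. After the exchange the expression becomes $\frac{(q^{n-\ell}a;q)_\ell}{(q^{2n-\ell}a;q)_\ell}\sum_{k=0}^n\frac{(a;q^3)_k q^k}{(q;q)_k(a;q)_{2k}}\,\Sigma_k$, where $\Sigma_k$ is the finite $i$-sum with the two $k$-dependent factors $(q^{-(n-i)};q)_k$ and $(q^{n-i}a;q)_k$ inserted.

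The heart of the matter is to evaluate $\Sigma_k$ in closed form. Writing $(q^{i-n};q)_k=(q^{-n};q)_k(q^{k-n};q)_i/(q^{-n};q)_i$ and $(q^{n-i}a;q)_k=(q^na;q)_k(q^{n-i}a;q)_i/(q^{n+k-i}a;q)_i$, the factor $(q^{-n};q)_i$ cancels the numerator $(q^{-n};q)_i$, while $(q^{n-i}a;q)_i$ cancels the denominator factor $(q^{n-i}a;q)_i$ of the displayed $q$-factorial quotient; after rewriting $1/(q^{n+k-i}a;q)_i$ through the reflection rule $(x;q)_i=(-x)^i q^{\binom i2}(q^{1-i}/x;q)_i$, the sum $\Sigma_k$ becomes a terminating very-well-poised series in $i$ with base $A=q^{-2n}/a$, the very-well-poised factor $\frac{a-q^{2i-2n}}{a-q^{-2n}}=\frac{1-Aq^{2i}}{1-A}$, terminating parameter $q^{-\ell}$, and one further free parameter $q^{k-n}$. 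This is precisely the confluent case of the terminating $_6\phi_5$-summation \eqref{terminating-65} in which the well-poised base is specialized to $q^{-2n}/a$, the surviving free parameter to $q^{k-n}$, and the remaining free parameter is sent to $0$. Applying that limiting case yields $\Sigma_k=(q^{-n};q)_k(q^na;q)_k\,\frac{(q^{2n-\ell}a;q)_\ell}{(q^{n-\ell+k}a;q)_\ell}$.

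Finally I would combine $\Sigma_k$ with the prefactor $\frac{(q^{n-\ell}a;q)_\ell}{(q^{2n-\ell}a;q)_\ell}$ and use the elementary ratio $\frac{(q^{n-\ell}a;q)_\ell}{(q^{n-\ell+k}a;q)_\ell}=\frac{(q^{n-\ell}a;q)_k}{(q^na;q)_k}$: the factors $(q^{2n-\ell}a;q)_\ell$ and $(q^na;q)_k$ cancel, restoring the $k$-summand to $\frac{(a;q^3)_k(q^{-n};q)_k(q^{n-\ell}a;q)_k}{(q;q)_k(a;q)_{2k}}q^k$, which is exactly the left-hand side of the theorem. The hard part will be the evaluation of $\Sigma_k$: one must recognize the series as the correct confluence of \eqref{terminating-65} and track with care the powers of $q$, the signs, and the $a$-powers produced by the two uses of the reflection rule, since this bookkeeping—rather than any conceptual step—is where the argument is most likely to go astray. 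A secondary point needing justification is the interchange of summations together with the range extension, which rests precisely on the vanishing of $(q^{-(n-i)};q)_k$ for $k>n-i$ and of $(q^{-n};q)_i$ for $i>n$.
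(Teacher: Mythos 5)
Your proposal is correct and is essentially the paper's own proof run in reverse: the authors insert the $c\to0$, $a\to q^{-2n}/a$, $b\to q^{k-n}$ specialization of \eqref{terminating-65} as a factor equal to $1$ into the left-hand side, interchange the two sums, and evaluate the inner $k$-sum by \eqref{q-andrews}, which is exactly your argument read from right to left with the same key lemma, the same interchange, and the same invocation of \eqref{q-andrews}.
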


\begin{proof}
 Letting $a\to q^{-2n}/a$, $b\to q^{k-n}$ and $c\to0$ for
\eqref{terminating-65}, we get the equation:
 \bnm\qquad\:\:
\ffnk{ccccc}{q}{q^{n-\ell+k}a}{q^{2n-\ell}a}_{\ell}\sum_{i=0}^{\ell}a^i
q^{(\ell+2n-i)i}\frac{a-q^{2i-2n}}{a-q^{-2n}}
\ffnk{ccccc}{q}{q^{-\ell},q^{k-n},q^{-2n}/a}
{q,q^{1+\ell-2n}/a,q^{n+k-i}a}_i=1.
 \enm
 Then there is the following relation:
  \bnm
&&\xqdn\sum_{k=0}^n(a;q^3)_k\ffnk{ccccc}{q}{q^{-n},q^{n-\ell}a}
{q,\sqrt{a},-\sqrt{a},\sqrt{qa},-\sqrt{qa}}_kq^k
\\&&\xqdn\:\:=\:\sum_{k=0}^n(a;q^3)_k\ffnk{ccccc}{q}{q^{-n},q^{n-\ell}a}
{q,\sqrt{a},-\sqrt{a},\sqrt{qa},-\sqrt{qa}}_kq^k
\\&&\xqdn\:\:\times\:
 \ffnk{ccccc}{q}{q^{n-\ell+k}a}{q^{2n-\ell}a}_{\ell}\sum_{i=0}^{\ell}a^i
q^{(\ell+2n-i)i}\frac{a-q^{2i-2n}}{a-q^{-2n}}
\ffnk{ccccc}{q}{q^{-\ell},q^{k-n},q^{-2n}/a}
{q,q^{1+\ell-2n}/a,q^{n+k-i}a}_i.
 \enm
Interchange the summation order for the last double sum to achieve
  \bnm
&&\xxqdn\sum_{k=0}^n(a;q^3)_k\ffnk{ccccc}{q}{q^{-n},q^{n-\ell}a}
{q,\sqrt{a},-\sqrt{a},\sqrt{qa},-\sqrt{qa}}_kq^k
\\&&\xxqdn\:\:=\:\ffnk{ccccc}{q}{q^{n-\ell}a}{q^{2n-\ell}a}_{\ell}
\sum_{i=0}^{\ell}a^i q^{(\ell+2n-i)i}\frac{a-q^{2i-2n}}{a-q^{-2n}}
\ffnk{ccccc}{q}{q^{-\ell},q^{-n},q^{-2n}/a}
{q,q^{1+\ell-2n}/a,q^{n-i}a}_i
\\&&\xxqdn\:\:\times\:
\sum_{k=0}^{n-i}(a;q^3)_k\ffnk{ccccc}{q}{q^{i-n},q^{n-i}a}
{q,\sqrt{a},-\sqrt{a},\sqrt{qa},-\sqrt{qa}}_kq^k.
 \enm
 Calculating the series on the last line by
\eqref{q-andrews}, we attain Theorem \ref{thm-a} to complete the
proof.
\end{proof}

When $\ell=0$, Theorem \ref{thm-a} reduces to \eqref{q-andrews}
exactly. Other two examples are displayed as follows.

\begin{exam}[$\ell=1$ in Theorem \ref{thm-a}]
\bnm
 &&\xxqdn\xxqdn\sum_{k=0}^n(a;q^3)_k\ffnk{ccccc}{q}{q^{-n},q^{n-1}a}
{q,\sqrt{a},-\sqrt{a},\sqrt{qa},-\sqrt{qa}}_kq^k
 \\&&\xxqdn\xxqdn\:=\:
\begin{cases}
\frac{a^m(1-aq^{-1})}{1-aq^{6m-1}}
\ffnk{ccccc}{q^3}{q,q^2}{qa,q^{-1}a}_m,&\qqdn n=3m;
\\[4mm]
\frac{a^{m+1}q^{3m}(1-q)}{1-aq^{6m+1}}
\ffnk{ccccc}{q^3}{q^2,q^4}{qa,q^2a}_m,&\qqdn n=1+3m;
\\[4mm]
0,&\qqdn n=2+3m.
\end{cases}
\enm
\end{exam}

\begin{exam}[$\ell=2$ in Theorem \ref{thm-a}]
\bnm
 &&\sum_{k=0}^n(a;q^3)_k\ffnk{ccccc}{q}{q^{-n},q^{n-2}a}
{q,\sqrt{a},-\sqrt{a},\sqrt{qa},-\sqrt{qa}}_kq^k
 \\&&\:=\:
\begin{cases}
\frac{a^m(1-aq^{-2})(1-aq^{-1})}{(1-aq^{6m-2})(1-aq^{6m-1})}
\ffnk{ccccc}{q^3}{q,q^2}{q^{-1}a,q^{-2}a}_m,&\qqdn n=3m;
\\[4mm]
\frac{a^{m+1}q^{3m-1}(1-q^2)(1-aq^{-1})}{(1-aq^{6m-1})(1-aq^{6m+1})}
\ffnk{ccccc}{q^3}{q^2,q^4}{q^{-1}a,qa}_m,&\qqdn n=1+3m;
\\[4mm]
\frac{a^{m+2}q^{6m}(1-q)(1-q^2)}{(1-aq^{6m+1})(1-aq^{6m+2})}
\ffnk{ccccc}{q^3}{q^4,q^5}{qa,q^2a}_m,&\qqdn n=2+3m.
\end{cases}
\enm
\end{exam}

Performing the replacement $a\to q^{3a}$ in Theorem \ref{thm-a} and
then letting $q\to1$, we obtain the following equation.

\begin{thm}\label{thm-b}
For a nonnegative integer $\ell$ and a complex number $a$, there
holds
 \bnm
&&\:\xxqdn_3F_2\ffnk{cccc}{\frac{3}{4}}{-n,&a,&3a+n-\ell}{&\frac{3a}{2},&\frac{1+3a}{2}}
=\fnk{ccccc}{1-n-3a}{1-2n-3a}_{\ell}
\\&&\:\xxqdn\:\:\times\:\:\sum_{i=0}^{\ell}(-1)^i\frac{3a+2n-2i}{3a+2n}\fnk{ccccc}{-\ell,-n,-2n-3a}{1,1-n-3a,1+\ell-2n-3a}_i
\\&&\:\xxqdn\:\:\times\:\:\fnk{ccccc}{\frac{1}{3},&\frac{2}{3}}{\frac{1}{3}+a,&\frac{2}{3}+a}_{\frac{n-i}{3}}\chi(n-i).
 \enm
\end{thm}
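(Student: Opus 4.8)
The statement asserts that Theorem~\ref{thm-b} is simply the $q\to1$ confluence of Theorem~\ref{thm-a} after the base change $a\to q^{3a}$. The plan is therefore to make this substitution in the identity of Theorem~\ref{thm-a}, which leaves a valid identity in $q$, and then pass to the limit $q\to1$ factor by factor. Since both the sum over $k$ (from $0$ to $n$) and the sum over $i$ (from $0$ to $\ell$) are finite, and the $\chi$-factor only restricts them further, the limit may be taken termwise with no convergence concerns; the equality of the two sides is preserved because each side has a finite limit. The three confluence relations I would record at the outset are, for base $q$, $\lim_{q\to1}(q^{\alpha};q)_k/(1-q)^k=(\alpha)_k$ and $\lim_{q\to1}(-q^{\beta};q)_k=2^k$, and for base $q^3$, $\lim_{q\to1}(q^{3\alpha};q^3)_k/\bigl(3(1-q)\bigr)^k=(\alpha)_k$.

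On the left-hand side, after $a\to q^{3a}$ the summand is $(q^{3a};q^3)_k$ times a ratio of base-$q$ factors in which $\sqrt a\to q^{3a/2}$ and $\sqrt{qa}\to q^{(1+3a)/2}$. Applying the relations above, $(q^{3a};q^3)_k$ contributes $3^k(1-q)^k(a)_k$; the base-$q$ numerator factors $(q^{-n};q)_k$ and $(q^{\,n-\ell+3a};q)_k$, together with $q^k\to1$, contribute $(1-q)^{2k}(-n)_k(n-\ell+3a)_k$; the denominator $(q;q)_k(q^{3a/2};q)_k(q^{(1+3a)/2};q)_k$ contributes $(1-q)^{3k}\,k!\,(3a/2)_k((1+3a)/2)_k$; and the two sign-changed factors $(-q^{3a/2};q)_k,(-q^{(1+3a)/2};q)_k$ contribute $2^{2k}$. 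All powers of $(1-q)$ cancel, the residual constant $3^k/2^{2k}=(3/4)^k$ produces the argument $z=\tfrac34$, and the left-hand side tends to the ${}_3F_2(\tfrac34)$ displayed in Theorem~\ref{thm-b}.

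On the right-hand side I would treat the three pieces in turn. The leading factor $(q^{\,n-\ell+3a};q)_\ell/(q^{\,2n-\ell+3a};q)_\ell$ tends to $(n-\ell+3a)_\ell/(2n-\ell+3a)_\ell$, which I rewrite as $(1-n-3a)_\ell/(1-2n-3a)_\ell$ using the reversal identity $(\alpha)_\ell=(-1)^\ell(1-\alpha-\ell)_\ell$ (the factors $(-1)^\ell$ cancel between numerator and denominator). In the $i$-sum, the prefactors $a^{(n+2i)/3}\to1$ and $q^{(\ell+2n-i)i}\to1$; the inner base-$q$ Pochhammer fraction, after cancelling its balanced $(1-q)^{3i}$, tends to $\dfrac{(-\ell)_i(-n)_i(-2n-3a)_i}{i!\,(1+\ell-2n-3a)_i\,(n-i+3a)_i}$; and the last base-$q^3$ fraction tends to $\dfrac{(1/3)_{(n-i)/3}(2/3)_{(n-i)/3}}{(1/3+a)_{(n-i)/3}(2/3+a)_{(n-i)/3}}$ exactly as on the left. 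Applying the reversal identity once more in the form $(n-i+3a)_i=(-1)^i(1-n-3a)_i$ both converts the troublesome denominator factor into $(1-n-3a)_i$ and produces the sign $(-1)^i$ that appears in Theorem~\ref{thm-b}.

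The one step that is not a direct application of the confluence relations is the rational factor $\dfrac{a-q^{2i-2n}}{a-q^{-2n}}$, which becomes $\dfrac{q^{3a}-q^{2i-2n}}{q^{3a}-q^{-2n}}$ and is of indeterminate form $0/0$ at $q=1$. Here I would expand to first order via $q^x=1+x\log q+O((\log q)^2)$ (equivalently apply L'Hospital's rule in $q$), obtaining $\dfrac{3a+2n-2i}{3a+2n}$, which is precisely the rational factor in the statement. I expect this indeterminate ratio, together with the careful bookkeeping of the powers of $(1-q)$ and of the constant $3$ coming from the base-$q^3$ symbols --- needed to confirm that every piece has a finite nonzero limit and that the argument collapses to $\tfrac34$ --- to be the only genuinely delicate points; everything else is a mechanical, term-by-term passage to the limit.
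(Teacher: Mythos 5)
Your proposal is correct and follows exactly the route the paper takes: the paper derives Theorem~\ref{thm-b} by the single remark ``performing the replacement $a\to q^{3a}$ in Theorem~\ref{thm-a} and then letting $q\to1$,'' and you have simply supplied the confluence bookkeeping (the $(1-q)$ cancellations, the $3^k/4^k\to(3/4)^k$ argument, the Pochhammer reversals, and the L'Hospital evaluation of the $0/0$ ratio) that the authors leave implicit. All of your intermediate computations check out, so this is the paper's proof with the details filled in.
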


When $\ell=0$, Theorem \ref{thm-b} reduces to \eqref{andrews}
exactly. Other two examples are laid out as follows.

\begin{exam}[$\ell=1$ in Theorem \ref{thm-b}]
\bnm
 &&\xxqdn\xxqdn_3F_2\ffnk{cccc}{\frac{3}{4}}{-n,&a,&3a+n-1}{&\frac{3a}{2},&\frac{1+3a}{2}}
 \\&&\xxqdn\xxqdn\:=\:
\begin{cases}
\frac{3a-1}{3a+6m-1}\fnk{ccccc}{\frac{1}{3},&\frac{2}{3}}{-\frac{1}{3}+a,&\frac{1}{3}+a}_m,&\qqdn
n=3m;
\\[4mm]
\frac{1}{3a+6m+1}\fnk{ccccc}{\frac{2}{3},&\frac{4}{3}}{\frac{1}{3}+a,&\frac{2}{3}+a}_m,&\qqdn
n=1+3m;
\\[4mm]
0,&\qqdn n=2+3m.
\end{cases}
\enm
\end{exam}

\begin{exam}[$\ell=2$ in Theorem \ref{thm-b}]
\bnm
 &&_3F_2\ffnk{cccc}{\frac{3}{4}}{-n,&a,&3a+n-2}{&\frac{3a}{2},&\frac{1+3a}{2}}
 \\&&\:=\:
\begin{cases}
\frac{(3a-2)(3a-1)}{(3a+6m-2)(3a+6m-1)}\fnk{ccccc}{\frac{1}{3},&\frac{2}{3}}{-\frac{1}{3}+a,&-\frac{2}{3}+a}_m,&\qqdn
n=3m;
\\[4mm]
\frac{2(3a-1)}{(3a+6m-1)(3a+6m+1)}\fnk{ccccc}{\frac{2}{3},&\frac{4}{3}}{-\frac{1}{3}+a,&\frac{1}{3}+a}_m,&\qqdn
n=1+3m;
\\[4mm]
\frac{2}{(3a+6m+1)(3a+6m+2)}\fnk{ccccc}{\frac{4}{3},&\frac{5}{3}}{\frac{1}{3}+a,&\frac{2}{3}+a}_m,&\qqdn
n=2+3m.
\end{cases}
\enm
\end{exam}

\begin{thm}\label{thm-c}
For a nonnegative integer $\ell$ and a complex number $a$, there
holds
 \bnm
&&\xqdn\sum_{k=0}^n(a;q^3)_k\ffnk{ccccc}{q}{q^{-n},q^{n-\ell}a}
{q,\sqrt{a},-\sqrt{a},\sqrt{q^{1-2\ell}a},-\sqrt{qa}}_kq^k
 =\ffnk{ccccc}{q}{q^{n-\ell}a,q^{\frac{1}{2}+n-\ell}\!\sqrt{a}}{q^{2n-\ell}a,q^{\frac{1}{2}-\ell}\!\sqrt{a}}_{\ell}
\\&&\xqdn\:\:\times\:\sum_{i=0}^{\ell}a^{\frac{2n-5i}{6}}
q^{(\frac{5}{2}+\ell)i}\frac{1-q^{2n-2i}a}{1-q^{2n}a}
\ffnk{ccccc}{q}{q^{-\ell},q^{-n},q^{-2n}/a}
{q,q^{1+\ell-2n}/a,q^{1-n}/a}_i
\\&&\xqdn\:\:\times\:
\ffnk{ccccc}{q^3}{q,q^2}{qa,q^2a}_{\frac{n-i}{3}}\chi(n-i).
 \enm
\end{thm}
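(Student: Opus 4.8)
The plan is to follow the proof of Theorem~\ref{thm-a} step for step, the only essential change being the specialization of the terminating $_6\phi_5$-identity~\eqref{terminating-65} that is used to fabricate a ``$=1$'' identity. The presence of the anomalous denominator parameter $\sqrt{q^{1-2\ell}a}=q^{1/2-\ell}\sqrt a$ (in place of the standard $\sqrt{qa}$) forces the two-quotient prefactor seen in the statement, and this is the signal that here the parameter $c$ in~\eqref{terminating-65} must be kept \emph{finite} instead of being sent to $0$ as in Theorem~\ref{thm-a}.

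Concretely, I would let $a\to q^{-2n}/a$, $b\to q^{k-n}$ and $c\to q^{1/2-n}/\sqrt a$ in~\eqref{terminating-65}. This $c$ is the ``balanced'' value characterized by $qa/c=c$ (with $a$ already replaced by $q^{-2n}/a$, so that $c^2=q^{1-2n}/a$); it makes the numerator parameter $c$ and the denominator parameter $qa/c$ of the summand coincide, so that $(c;q)_i$ and $(qa/c;q)_i$ cancel term by term. After the standard very-well-poised collapse of $\frac{(a;q)_i(q\sqrt a;q)_i(-q\sqrt a;q)_i}{(\sqrt a;q)_i(-\sqrt a;q)_i}$ into $\frac{1-aq^{2i}}{1-a}(a;q)_i$ (with $a=q^{-2n}/a$, reproducing the factor $\frac{a-q^{2i-2n}}{a-q^{-2n}}$), what survives is exactly the three-over-three quotient $\ffnk{ccccc}{q}{q^{-\ell},q^{k-n},q^{-2n}/a}{q,q^{1+\ell-2n}/a,q^{1-n-k}/a}_i$ together with the monomial $(q^{1+\ell}a/(bc))^i=q^{(1/2+\ell-k)i}a^{-i/2}$. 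The surviving right-hand side of~\eqref{terminating-65} is the two-quotient prefactor $\frac{(q^{1-2n}/a;q)_\ell\,(q^{1/2-k}/\sqrt a;q)_\ell}{(q^{1-n-k}/a;q)_\ell\,(q^{1/2-n}/\sqrt a;q)_\ell}$; inverting it yields a ``$=1$'' identity $P(k)\sum_{i=0}^{\ell}T(i,k)=1$ with $P(k)$ of this reciprocal shape.

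Next, exactly as in Theorem~\ref{thm-a}, I would insert this ``$=1$'' factor into the left-hand sum over $k$ and interchange the order of summation. Three shifted-factorial manipulations then do all the work. The splitting $(q^{-n};q)_k(q^{k-n};q)_i=(q^{-n};q)_i(q^{i-n};q)_k$ absorbs the $k$-dependent $(q^{k-n};q)_i$ into the inner sum, converting the index $q^{-n}$ into the Andrews index $q^{i-n}=q^{-(n-i)}$; the concatenation/reversal pair applied to $(q^{n-\ell}a;q)_k$, $(q^{n-\ell+k}a;q)_\ell$ and $(q^{1-n-k}/a;q)_i$ turns the modified numerator parameter $q^{n-\ell}a$ into $q^{n-i}a$; and the finite-product identity $\frac{(x;q)_k}{(xq^{-\ell};q)_k}=\frac{(xq^{k-\ell};q)_\ell}{(xq^{-\ell};q)_\ell}$ with $x=q^{1/2}\sqrt a$ turns the anomalous $q^{1/2-\ell}\sqrt a$ back into $q^{1/2}\sqrt a$. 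The factor $(q^{1/2+k-\ell}\sqrt a;q)_\ell$ produced by this last step cancels against the matching factor coming from $P(k)$ (after reversing $(q^{1/2-k}/\sqrt a;q)_\ell$), leaving the clean prefactor $\ffnk{ccccc}{q}{q^{n-\ell}a,q^{1/2+n-\ell}\sqrt a}{q^{2n-\ell}a,q^{1/2-\ell}\sqrt a}_\ell$, while one checks that all the $k$-dependent powers of $q$ cancel, so that the inner $k$-sum is literally the left-hand side of~\eqref{q-andrews} with $n$ replaced by $n-i$.

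Finally, evaluating that inner sum by~\eqref{q-andrews} produces $a^{(n-i)/3}\ffnk{ccccc}{q^3}{q,q^2}{qa,q^2a}_{(n-i)/3}\chi(n-i)$. Combining the monomial power $a^{-i/2}$ from the $_6\phi_5$-argument with the Andrews power $a^{(n-i)/3}$ gives the stated $a^{(2n-5i)/6}$, and combining $q^{(1/2+\ell)i}$ from that argument with the extra $q^{2i}$ picked up when $\frac{a-q^{2i-2n}}{a-q^{-2n}}$ is rewritten as $\frac{1-q^{2n-2i}a}{1-q^{2n}a}$ gives the stated $q^{(5/2+\ell)i}$, which completes Theorem~\ref{thm-c}. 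I expect the genuine obstacle to be precisely this last accounting: one must verify that all the stray powers of $q$ and $a$ and the sign generated by the several reversal steps (and by the choice of square root in $c$) cancel so cleanly as to leave exactly $a^{(2n-5i)/6}q^{(5/2+\ell)i}$ and the quotient $\ffnk{ccccc}{q}{q^{-\ell},q^{-n},q^{-2n}/a}{q,q^{1+\ell-2n}/a,q^{1-n}/a}_i$; the conceptual steps are routine once the balanced value of $c$ has been guessed.
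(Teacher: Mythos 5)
Your proposal follows exactly the paper's own route: the authors likewise specialize \eqref{terminating-65} with $a\to q^{-2n}/a$, $b\to q^{k-n}$ and $c\to q^{\frac12-n}/\sqrt a$ (the balanced choice with $qa/c=c$), insert the resulting ``$=1$'' identity into the sum, interchange the order of summation, and evaluate the inner $k$-sum by \eqref{q-andrews} with $n$ replaced by $n-i$. The bookkeeping you describe (the $q^{2i}$ from rewriting $\frac{a-q^{2i-2n}}{a-q^{-2n}}$, the $a^{-i/2}$ from the argument, and the reversal of the $k$-dependent factors into the stated prefactor) matches the paper's computation, so the proposal is correct and essentially identical in approach.
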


\begin{proof}
 Letting $a\to q^{-2n}/a$, $b\to q^{k-n}$ and $c\to q^{\frac{1}{2}-n}/\sqrt{a}$ for
\eqref{terminating-65}, the resulting equation reads as
 \bnm
&&\ffnk{ccccc}{q}{q^{n-\ell}a,q^{\frac{1}{2}+n-\ell}\!\sqrt{a}}{q^{2n-\ell}a,q^{\frac{1}{2}+k-\ell}\!\sqrt{a}}_{\ell}
\sum_{i=0}^{\ell}\frac{q^{(5/2+\ell)i}}{a^{i/2}}\frac{1-q^{2n-2i}a}{1-q^{2n}a}
\ffnk{ccccc}{q}{q^{-\ell},q^{k-n},q^{-2n}/a}
{q,q^{1+\ell-2n}/a,q^{1-n}/a}_i
\\&&\times\:\ffnk{ccccc}{q}{q^{n-\ell+k}a}{q^{n-\ell}a}_{\ell-i}=1.
 \enm
 Then we can proceed as follows:
  \bnm
&&\xqdn\sum_{k=0}^n(a;q^3)_k\ffnk{ccccc}{q}{q^{-n},q^{n-\ell}a}
{q,\sqrt{a},-\sqrt{a},\sqrt{q^{1-2\ell}a},-\sqrt{qa}}_kq^k
\\&&\xqdn\:\:=\:\sum_{k=0}^n(a;q^3)_k\ffnk{ccccc}{q}{q^{-n},q^{n-\ell}a}
{q,\sqrt{a},-\sqrt{a},\sqrt{q^{1-2\ell}a},-\sqrt{qa}}_kq^k
\\&&\xqdn\:\:\times\:
\ffnk{ccccc}{q}{q^{n-\ell}a,q^{\frac{1}{2}+n-\ell}\!\sqrt{a}}{q^{2n-\ell}a,q^{\frac{1}{2}+k-\ell}\!\sqrt{a}}_{\ell}
\sum_{i=0}^{\ell}\frac{q^{(5/2+\ell)i}}{a^{i/2}}\frac{1-q^{2n-2i}a}{1-q^{2n}a}
\ffnk{ccccc}{q}{q^{-\ell},q^{k-n},q^{-2n}/a}
{q,q^{1+\ell-2n}/a,q^{1-n}/a}_i
\\&&\xqdn\:\:\times\:
\ffnk{ccccc}{q}{q^{n-\ell+k}a}{q^{n-\ell}a}_{\ell-i}.
 \enm
Interchange the summation order for the last double sum to get
 \bnm
&&\xxqdn\sum_{k=0}^n(a;q^3)_k\ffnk{ccccc}{q}{q^{-n},q^{n-\ell}a}
{q,\sqrt{a},-\sqrt{a},\sqrt{q^{1-2\ell}a},-\sqrt{qa}}_kq^k
 =\ffnk{ccccc}{q}{q^{n-\ell}a,q^{\frac{1}{2}+n-\ell}\!\sqrt{a}}{q^{2n-\ell}a,q^{\frac{1}{2}-\ell}\!\sqrt{a}}_{\ell}
\\&&\xxqdn\:\:\times\:\sum_{i=0}^{\ell}\frac{q^{(5/2+\ell)i}}{a^{i/2}}\frac{1-q^{2n-2i}a}{1-q^{2n}a}
\ffnk{ccccc}{q}{q^{-\ell},q^{-n},q^{-2n}/a}
{q,q^{1+\ell-2n}/a,q^{1-n}/a}_i
\\&&\xxqdn\:\:\times\:
\sum_{k=0}^{n-i}(a;q^3)_k\ffnk{ccccc}{q}{q^{i-n},q^{n-i}a}
{q,\sqrt{a},-\sqrt{a},\sqrt{qa},-\sqrt{qa}}_kq^k.
 \enm
 Evaluating the series on the last line by
\eqref{q-andrews}, we achieve Theorem \ref{thm-c} to complete the
proof.
\end{proof}

When $\ell=0$, Theorem \ref{thm-c} also reduces to \eqref{q-andrews}
exactly. Other two examples are displayed as follows.

\begin{exam}[$\ell=1$ in Theorem \ref{thm-c}]
\bnm
 &&\qqdn\xqdn\xxqdn\xxqdn\sum_{k=0}^n(a;q^3)_k\ffnk{ccccc}{q}{q^{-n},q^{n-1}a}
{q,\sqrt{a},-\sqrt{a},\sqrt{q^{-1}a},-\sqrt{qa}}_kq^k
 \\&&\qqdn\xqdn\xxqdn\xxqdn\:=\:
\begin{cases}
\frac{a^m(\sqrt{q}+\sqrt{a})}{\sqrt{q}+q^{3m}\!\sqrt{a}}
\ffnk{ccccc}{q^3}{q,q^2}{qa,q^{-1}a}_m,&\qqdn n=3m;
\\[4mm]
\frac{a^{m+1/2}(q-1)}{(\sqrt{q}-\sqrt{a})(1+q^{3m+1/2}\!\sqrt{a})}
\ffnk{ccccc}{q^3}{q^2,q^4}{qa,q^2a}_m,&\qqdn n=1+3m;
\\[4mm]
0,&\qqdn n=2+3m.
\end{cases}
\enm
\end{exam}

\begin{exam}[$\ell=2$ in Theorem \ref{thm-c}]
\bnm
 &&\sum_{k=0}^n(a;q^3)_k\ffnk{ccccc}{q}{q^{-n},q^{n-2}a}
{q,\sqrt{a},-\sqrt{a},\sqrt{q^{-3}a},-\sqrt{qa}}_kq^k
 \\&&\:=\:
\begin{cases}
\frac{a^m(q^2-a)(\sqrt{q}+\sqrt{a})(1-q^{3m-3/2}\!\sqrt{a})}{(q^2-q^{6m}a)(1-q^{-3/2}\!\sqrt{a})(\sqrt{q}+q^{3m}\!\sqrt{a})}
\ffnk{ccccc}{q^3}{q,q^2}{q^{-1}a,q^{-2}a}_m,&\qqdn n=3m;
\\[4mm]
\frac{a^{m+1/2}(q^2-1)(\sqrt{q}+\sqrt{a})}{(q^2-\sqrt{qa})(1+q^{3m-1/2}\!\sqrt{a})(1+q^{3m+1/2}\!\sqrt{a})}
\ffnk{ccccc}{q^3}{q^2,q^4}{q^{-1}a,qa}_m,&\qqdn n=1+3m;
\\[4mm]
\frac{a^{m+1}(1-q)(1-q^2)(1-q^{3m+3/2}\!\sqrt{a})}{(\sqrt{q}-\sqrt{a})(\sqrt{q^3}-\sqrt{a})(1+q^{3m+1/2}\!\sqrt{a})(1-q^{6m+2}a)}
\ffnk{ccccc}{q^3}{q^4,q^5}{qa,q^2a}_m,&\qqdn n=2+3m.
\end{cases}
\enm
\end{exam}

Employing the substitution $a\to q^{3a}$ in Theorem \ref{thm-c} and
then letting $q\to1$, we obtain the following equation.

\begin{thm}\label{thm-d}
For a nonnegative integer $\ell$ and a complex number $a$, there
holds
 \bnm
&&_3F_2\ffnk{cccc}{\frac{3}{4}}{-n,&a,&3a+n-\ell}{&\frac{3a}{2},&\frac{1+3a}{2}-\ell}
=\fnk{ccccc}{1-3a-n,\frac{1-3a}{2}-n}{1-3a-2n,\frac{1-3a}{2}}_{\ell}
\\&&\:\:\times\:\:\sum_{i=0}^{\ell}\frac{3a+2n-2i}{3a+2n}\fnk{ccccc}{-\ell,-n,-2n-3a}{1,1-n-3a,1+\ell-2n-3a}_i
\\&&\:\:\times\:\:\fnk{ccccc}{\frac{1}{3},&\frac{2}{3}}{\frac{1}{3}+a,&\frac{2}{3}+a}_{\frac{n-i}{3}}\chi(n-i).
 \enm
\end{thm}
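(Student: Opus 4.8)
The plan is to derive Theorem \ref{thm-d} directly from Theorem \ref{thm-c} by the substitution $a\to q^{3a}$ followed by the confluence $q\to1$, in complete analogy with the passage from Theorem \ref{thm-a} to Theorem \ref{thm-b}. Because both the outer sum ($0\le k\le n$) and the inner sum ($0\le i\le\ell$) are finite, the limit may be taken termwise without any analytic subtlety, so the whole argument reduces to computing the $q\to1$ limit of each $q$-shifted factorial. The only tools I would invoke are the elementary confluence $\lim_{q\to1}\frac{(q^x;q)_m}{(1-q)^m}=(x)_m$, its base-$q^3$ companion $\lim_{q\to1}\frac{(q^x;q^3)_m}{(1-q^3)^m}=(x/3)_m$, and the observation that a $q$-shifted factorial whose base point tends to $-1$ contributes a finite constant, namely $\lim_{q\to1}(-q^{x};q)_m=2^m$.

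First I would treat the left-hand side. After $a\to q^{3a}$ the $k$-th summand carries in its numerator $(q^{3a};q^3)_k$, $(q^{-n};q)_k$, $(q^{n-\ell+3a};q)_k$ and the factor $q^k$, and in its denominator $(q;q)_k$, $(q^{3a/2};q)_k$, $(-q^{3a/2};q)_k$, $(q^{(1-2\ell+3a)/2};q)_k$, $(-q^{(1+3a)/2};q)_k$. Each factor whose base point tends to $1$ vanishes to first order like $(1-q)$; writing $(1-q^3)^k=(1-q)^k(1+q+q^2)^k$, the base-$q^3$ numerator factor additionally supplies $3^k$, while the two denominator symbols with base point tending to $-1$ supply $2^k\cdot 2^k=4^k$. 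All powers of $(1-q)$ then cancel and the surviving scalar is $(3/4)^k$, so the left-hand side tends to
\[{}_3F_2\ffnk{cccc}{\frac34}{-n,&a,&3a+n-\ell}{&\frac{3a}2,&\frac{1+3a}2-\ell},\]
which is exactly the left-hand side of Theorem \ref{thm-d}.

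Next I would pass to the limit on the right-hand side factor by factor. Each of the three $q$-Pochhammer ratios there is balanced, with equally many symbols above and below, so the powers of $(1-q)$ and of $(1-q^3)$ cancel inside each ratio and the limits are finite: the base-$q^3$ ratio gives $\fnk{ccccc}{\frac13,&\frac23}{\frac13+a,&\frac23+a}_{\frac{n-i}3}$, the inner ratio gives $\fnk{ccccc}{-\ell,-n,-2n-3a}{1,1-n-3a,1+\ell-2n-3a}_i$, and the opening prefactor gives $\frac{(n-\ell+3a)_\ell\,(\frac12+n-\ell+\frac{3a}2)_\ell}{(2n-\ell+3a)_\ell\,(\frac12-\ell+\frac{3a}2)_\ell}$. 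The scalar pieces are immediate: $a^{(2n-5i)/6}=q^{a(2n-5i)/2}\to1$ and $q^{(5/2+\ell)i}\to1$, while $\frac{1-q^{2n-2i+3a}}{1-q^{2n+3a}}\to\frac{3a+2n-2i}{3a+2n}$, reproducing the stated linear factor.

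The final step, and the only one that needs an algebraic identity rather than a limit, is to recast the prefactor into the published form. Applying the reflection rule $(x-\ell)_\ell=(-1)^\ell(1-x)_\ell$ to the four symbols $(n-\ell+3a)_\ell$, $(\frac12+n-\ell+\frac{3a}2)_\ell$, $(2n-\ell+3a)_\ell$ and $(\frac12-\ell+\frac{3a}2)_\ell$ turns them into $(-1)^\ell$ times $(1-3a-n)_\ell$, $(\frac{1-3a}2-n)_\ell$, $(1-3a-2n)_\ell$ and $(\frac{1-3a}2)_\ell$ respectively; the four signs cancel in pairs, and the prefactor becomes $\fnk{ccccc}{1-3a-n,\frac{1-3a}2-n}{1-3a-2n,\frac{1-3a}2}_\ell$, as claimed. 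I expect the genuine obstacle to be the bookkeeping on the left-hand side: one must carefully separate the $q$-shifted factorials whose base point tends to $1$ (first-order zeros in $(1-q)$) from those whose base point tends to $-1$ (which tend to the constant $2$), since it is precisely this asymmetry—two denominator symbols of the second kind set against the $3^k$ produced by the base-$q^3$ numerator factor—that regenerates the characteristic argument $3/4$. Once this accounting is done correctly, every remaining limit is routine and the two sides coincide.
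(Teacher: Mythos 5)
Your proposal is correct and is exactly the paper's route: the authors obtain Theorem \ref{thm-d} by the single stated step of substituting $a\to q^{3a}$ in Theorem \ref{thm-c} and letting $q\to1$, and your termwise limit computation (including the $3^k/4^k$ bookkeeping that produces the argument $\frac34$ and the reflection $(x-\ell)_\ell=(-1)^\ell(1-x)_\ell$ for the prefactor) supplies precisely the details the paper omits.
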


When $\ell=0$, Theorem \ref{thm-d} also reduces to \eqref{andrews}
exactly. Other two examples are laid out as follows.

\begin{exam}[$\ell=1$ in Theorem \ref{thm-d}]
\bnm
 &&\qdn\xxqdn\xxqdn_3F_2\ffnk{cccc}{\frac{3}{4}}{-n,&a,&3a+n-1}{&\frac{3a-1}{2},&\frac{3a}{2}}
 \\&&\qdn\xxqdn\xxqdn\:=\:
\begin{cases}
\fnk{ccccc}{\frac{1}{3},&\frac{2}{3}}{-\frac{1}{3}+a,&\frac{1}{3}+a}_m,&\qqdn
n=3m;
\\[4mm]
\frac{1}{1-3a}\fnk{ccccc}{\frac{2}{3},&\frac{4}{3}}{\frac{1}{3}+a,&\frac{2}{3}+a}_m,&\qqdn
n=1+3m;
\\[4mm]
0,&\qqdn n=2+3m.
\end{cases}
\enm
\end{exam}

\begin{exam}[$\ell=2$ in Theorem \ref{thm-d}]
\bnm
 &&_3F_2\ffnk{cccc}{\frac{3}{4}}{-n,&a,&3a+n-2}{&\frac{3a-3}{2},&\frac{3a}{2}}
 \\&&\:=\:
\begin{cases}
\frac{(3a-2)(a+2m-1)}{(a-1)(3a+6m-2)}\fnk{ccccc}{\frac{1}{3},&\frac{2}{3}}{-\frac{1}{3}+a,&-\frac{2}{3}+a}_m,&\qqdn
n=3m;
\\[4mm]
\frac{2}{3(1-a)}\fnk{ccccc}{\frac{2}{3},&\frac{4}{3}}{-\frac{1}{3}+a,&\frac{1}{3}+a}_m,&\qqdn
n=1+3m;
\\[4mm]
\frac{2(a+2m+1)}{(a-1)(3a-1)(3a+6m+2)}\fnk{ccccc}{\frac{4}{3},&\frac{5}{3}}{\frac{1}{3}+a,&\frac{2}{3}+a}_m,&\qqdn
n=2+3m.
\end{cases}
\enm
\end{exam}

\section{Several identities from reversal}
Performing the replacement $k\to n-k$ and $a\to q^{1-2n}/b$ in
Theorem \ref{thm-a}, we derive the following equation.

\begin{thm}\label{thm-e}
For a nonnegative integer $\ell$ and a complex number $b$, there
holds
 \bnm
&&\xqdn\sum_{k=0}^n\frac{1}{(q^{2-n}b;q^3)_k}\ffnk{ccccc}{q}{q^{-n},\sqrt{b},-\sqrt{b},\sqrt{qb},-\sqrt{qb}}
{q,q^{\ell}b}_kq^{(1+\ell)k}=\frac{(q^{1-2n}/b;q)_n}{(q^{1-2n}/b;q^3)_n}
\\&&\xqdn\:\:\times\:\sum_{i=0}^{\ell}(-1)^n\frac{q^{\{(4+4n+6\ell-6i)i-n-n^2\}/6}}{b^{(n+2i)/3}}
\frac{q-q^{2i}b}{q-b}
\ffnk{ccccc}{q}{q^{-\ell},q^{-n},b/q}{q,q^{\ell}b,q^{1-n-i}/b}_i
\\&&\xqdn\:\:\times\:
\ffnk{ccccc}{q^3}{q,q^2}{q^{2-2n}/b,q^{3-2n}/b}_{\frac{n-i}{3}}\chi(n-i).
 \enm
\end{thm}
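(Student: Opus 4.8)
The plan is to obtain Theorem~\ref{thm-e} directly from Theorem~\ref{thm-a} by the stated change of variables, treating the two sides asymmetrically: the substitution $a\to q^{1-2n}/b$ is applied to both sides, while the reindexing $k\to n-k$ merely reshuffles the terminating $k$-sum on the left. Since the two operations commute (one touches only $a$, the other only the summation index), I would first replace $a$ by $q^{1-2n}/b$ everywhere. On the right-hand side this is purely cosmetic: one checks term by term that $q^{n-\ell}a\to q^{1-n-\ell}/b$, $q^{2n-\ell}a\to q^{1-\ell}/b$, $q^{-2n}/a\to b/q$, $q^{1+\ell-2n}/a\to q^{\ell}b$, $q^{n-i}a\to q^{1-n-i}/b$ and $qa,\,q^{2}a\to q^{2-2n}/b,\,q^{3-2n}/b$, while $\frac{a-q^{2i-2n}}{a-q^{-2n}}\to\frac{q-q^{2i}b}{q-b}$ and $a^{(n+2i)/3}\to q^{(1-2n)(n+2i)/3}\,b^{-(n+2i)/3}$. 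Thus every Pochhammer symbol inside the $i$-sum of Theorem~\ref{thm-a} already coincides with the corresponding symbol in Theorem~\ref{thm-e}, and the genuine work is to reconcile the $i$-free prefactor, the sign $(-1)^n$, and the $q$-exponent $\{(4+4n+6\ell-6i)i-n-n^{2}\}/6$.

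The core computation is the reversal of the $k$-sum. After setting $k\to n-k$ I would rewrite each factor of the summand by the elementary reflection formula
\[(x;q)_{n-k}=\frac{(x;q)_n}{(q^{1-n}/x;q)_k}\left(-\frac{q}{x}\right)^{k}q^{\binom{k}{2}-nk},\]
applied with base $q$ to $(q^{-n};q)_k$, $(q^{n-\ell}a;q)_k$ and $(q;q)_k$, with base $q^{3}$ to $(a;q^{3})_k$, and with $n\to 2n$, $k\to 2k$ to the quartet $(\sqrt a;q)_k(-\sqrt a;q)_k(\sqrt{qa};q)_k(-\sqrt{qa};q)_k=(a;q)_{2k}$. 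Under $a=q^{1-2n}/b$ the four reflected ``variable'' arguments become exactly $q^{2-n}b$ (base $q^{3}$), $q$, $q^{\ell}b$ and $b$, reproducing the denominator symbol $(q^{2-n}b;q^{3})_k$, the factors $(q;q)_k$, $(q^{\ell}b;q)_k$ and the numerator quartet $(b;q)_{2k}$ of Theorem~\ref{thm-e}. The delicate point here is the accounting of the $q$-powers: one must check that the four signs $(-1)^{k}$ multiply to $1$, that the powers of $b$ cancel ($b^{k}$ from the $(a;q^{3})_k$ and $(q^{n-\ell}a;q)_k$ reflections against $b^{-2k}$ from the quartet), that the quadratic terms $\binom{k}{2}$ cancel, and that the linear terms — together with the $q^{k}\to q^{n-k}$ coming from the explicit factor $q^{k}$ — sum to precisely $(1+\ell)k$, producing the argument $q^{(1+\ell)k}$ of Theorem~\ref{thm-e}.

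The reflection also produces a $k$-independent constant $C$, namely the product of the leading Pochhammers $(a;q^{3})_n$, $(q^{-n};q)_n$, $(q^{n-\ell}a;q)_n$, $q^{n}$ with the reciprocals of $(q;q)_n$ and $(a;q)_{2n}$, all evaluated at $a=q^{1-2n}/b$. Writing the reversed left side as $C$ times the left side of Theorem~\ref{thm-e}, the theorem becomes the assertion that $C^{-1}$ times the substituted right side of Theorem~\ref{thm-a} equals the right side of Theorem~\ref{thm-e}; since the $i$-summands already agree, this reduces to one prefactor identity. Here I would split $(q^{1-2n}/b;q)_{2n}=(q^{1-2n}/b;q)_n(q^{1-n}/b;q)_n$ to expose the advertised factor $(q^{1-2n}/b;q)_n/(q^{1-2n}/b;q^{3})_n$, and collapse the remaining $b$-dependent ratio
\[\frac{(q^{1-n}/b;q)_n\,(q^{1-n-\ell}/b;q)_\ell}{(q^{1-n-\ell}/b;q)_n\,(q^{1-\ell}/b;q)_\ell}=1\]
via the splitting $(X;q)_{n+\ell}=(X;q)_n(Xq^{n};q)_\ell=(X;q)_\ell(Xq^{\ell};q)_n$ with $X=q^{1-n-\ell}/b$. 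What survives is the pure constant $(q;q)_n/(q^{-n};q)_n=(-1)^n q^{n(n+1)/2}$, which simultaneously explains the sign $(-1)^n$ and, when combined with the residual powers $q^{-n}$, $q^{(1-2n)(n+2i)/3}$ and $q^{(\ell+2n-i)i}$, must reduce to the stated exponent.

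The main obstacle is therefore not conceptual but bookkeeping: confirming that the accumulated half-integer and fractional powers of $q$ consolidate correctly. Concretely, after clearing denominators by a factor of $6$ the required exponent collapses to the polynomial identity $3n(n+1)-6n+2(1-2n)(n+2i)+6(\ell+2n-i)i=(4+4n+6\ell-6i)i-n-n^{2}$, a routine expansion in $n,i,\ell$ that completes the proof. The only structural subtleties are the mismatched bases $q$ and $q^{3}$ in the reflections and the preservation of the support condition $\chi(n-i)$; the latter is automatic, since the $q^{3}$-factorial factor transforms directly into its counterpart and leaves $\chi(n-i)$ untouched.
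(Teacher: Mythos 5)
Your proposal is correct and follows exactly the route the paper itself takes (the paper merely states that Theorem \ref{thm-e} results from Theorem \ref{thm-a} via $k\to n-k$ and $a\to q^{1-2n}/b$, omitting all details); your reflection-formula bookkeeping, the prefactor collapse via $(X;q)_{n+\ell}=(X;q)_n(Xq^n;q)_\ell=(X;q)_\ell(Xq^\ell;q)_n$, and the final exponent identity all check out. Nothing further is needed.
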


When $\ell=0$, Theorem \ref{thm-e} reduces to
\eqref{q-andrews-reversal} exactly. Other two examples are displayed
as follows.

\begin{exam}[$\ell=1$ in Theorem \ref{thm-e}]
\bnm
 &&\xxqdn\xxqdn\sum_{k=0}^n\frac{1}{(q^{2-n}b;q^3)_k}\ffnk{ccccc}{q}{q^{-n},\sqrt{b},-\sqrt{b},\sqrt{qb},-\sqrt{qb}}
{q,qb}_kq^{2k}
 \\&&\xxqdn\xxqdn\:=\:
\begin{cases}
\ffnk{ccccc}{q^3}{q,q^2}{q/b,q^{2}b}_m,&\qqdn n=3m;
\\[4mm]
\frac{1-q}{1-qb} \ffnk{ccccc}{q^3}{q^2,q^4}{q^2/b,q^4b}_m,&\qqdn
n=1+3m;
\\[4mm]
0,&\qqdn n=2+3m.
\end{cases}
\enm
\end{exam}

\begin{exam}[$\ell=2$ in Theorem \ref{thm-e}]
\bnm
 &&\xxqdn\xxqdn\sum_{k=0}^n\frac{1}{(q^{2-n}b;q^3)_k}\ffnk{ccccc}{q}{q^{-n},\sqrt{b},-\sqrt{b},\sqrt{qb},-\sqrt{qb}}
{q,q^2b}_kq^{3k}
 \\&&\xxqdn\xxqdn\:=\:
\begin{cases}
\ffnk{ccccc}{q^3}{q,q^2}{q/b,q^{2}b}_m,&\qqdn n=3m;
\\[4mm]
\frac{1-q^2}{1-q^2b} \ffnk{ccccc}{q^3}{q^2,q^4}{q^2/b,q^4b}_m,&\qqdn
n=1+3m;
\\[4mm]
\frac{(1-q)(1-q^2)}{(1-q^2b)(1-q^3b)}
\ffnk{ccccc}{q^3}{q^4,q^5}{q^3/b,q^6b}_m,&\qqdn n=2+3m.
\end{cases}
\enm
\end{exam}

Employing the substitution $a\to q^{3a}$ in Theorem \ref{thm-e} and
then letting $q\to1$, we get the following equation.

\begin{thm}\label{thm-f}
For a nonnegative integer $\ell$ and a complex number $a$, there
holds
 \bnm
&&\xxqdn_3F_2\ffnk{cccc}{\frac{4}{3}}{-n,&\frac{3b}{2},&\frac{1+3b}{2}}{&3b+\ell,&b-\frac{n-2}{3}}
=\fnk{ccccc}{3b+n}{\frac{1-2n}{3}-b}_{n}
\\&&\xxqdn\:\:\times\:\:\sum_{i=0}^{\ell}\frac{(-1)^i}{3^n}\frac{3b-1+2i}{3b-1}\fnk{ccccc}{-\ell,-n,3b-1}{1,3b+n,3b+\ell}_i
\\&&\xxqdn\:\:\times\:\:\fnk{ccccc}{\frac{1}{3},&\frac{2}{3}}{\frac{2-2n-3b}{3},&\frac{3-2n-3b}{3}}_{\frac{n-i}{3}}\chi(n-i).
 \enm
\end{thm}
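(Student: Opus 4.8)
The plan is to realize Theorem~\ref{thm-f} as the $q\to1$ confluence of Theorem~\ref{thm-e}, in complete analogy with the passage from Theorem~\ref{thm-a} to Theorem~\ref{thm-b}. I would perform the base substitution $b\to q^{3b}$ in Theorem~\ref{thm-e} and then let $q\to1$, using throughout the two confluence limits $\lim_{q\to1}(q^{\alpha};q)_m/(1-q)^m=(\alpha)_m$ and $\lim_{q\to1}(q^{\alpha};q^3)_m/(1-q^3)^m=(\alpha/3)_m$, together with $1-q^3\sim3(1-q)$.

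First I would collapse the four square-root entries on the left. The elementary product $(\sqrt{b};q)_k(-\sqrt{b};q)_k(\sqrt{qb};q)_k(-\sqrt{qb};q)_k=(b;q)_{2k}$ rewrites the summand of Theorem~\ref{thm-e} as $\frac{(q^{-n};q)_k(b;q)_{2k}}{(q;q)_k(q^{\ell}b;q)_k(q^{2-n}b;q^3)_k}\,q^{(1+\ell)k}$. Setting $b\to q^{3b}$, the numerator carries $(1-q)^{3k}$ vanishing factors and the denominator carries $(1-q)^{3k}$ as well, but with an extra $3^{k}$ supplied by the $q^3$-base term $(q^{2-n+3b};q^3)_k$; the $(1-q)$ powers cancel and $3^{-k}$ remains, while $q^{(1+\ell)k}\to1$. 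Applying the duplication $(3b)_{2k}=4^k(\tfrac{3b}{2})_k(\tfrac{1+3b}{2})_k$ splits $(b;q)_{2k}$ into the two numerator parameters $\tfrac{3b}{2},\tfrac{1+3b}{2}$ and furnishes a compensating $4^{k}$. The surviving factor $4^k/3^k=(4/3)^k$ is exactly the argument of the $_3F_2(\tfrac43)$ on the left of Theorem~\ref{thm-f}, and the two denominators limit to $(3b+\ell)_k$ and $(b-\tfrac{n-2}{3})_k$, which completes the left-hand side.

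For the right-hand side I would pass to the limit factor by factor. The prefactor $\frac{(q^{1-2n}/b;q)_n}{(q^{1-2n}/b;q^3)_n}$ tends to $\frac{(1-2n-3b)_n}{3^{\,n}(\tfrac{1-2n}{3}-b)_n}$, and the reflection formula $(a)_m=(-1)^m(1-a-m)_m$ gives $(1-2n-3b)_n=(-1)^n(3b+n)_n$, producing the leading $\fnk{ccccc}{3b+n}{\frac{1-2n}{3}-b}_n$ of Theorem~\ref{thm-f} up to a factor $(-1)^n3^{-n}$. Inside the sum, the finite quotient $\ffnk{ccccc}{q}{q^{-\ell},q^{-n},b/q}{q,q^{\ell}b,q^{1-n-i}/b}_i$ tends to $\frac{(-\ell)_i(-n)_i(3b-1)_i}{i!\,(3b+\ell)_i(1-n-i-3b)_i}$; a second use of reflection turns $(1-n-i-3b)_i=(-1)^i(3b+n)_i$, simultaneously installing the denominator parameter $3b+n$ and extracting the sign $(-1)^i$. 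The rational weight satisfies $\frac{q-q^{2i}b}{q-b}\to\frac{3b-1+2i}{3b-1}$, and the $q^3$-base factor $\ffnk{ccccc}{q^3}{q,q^2}{q^{2-2n}/b,q^{3-2n}/b}_{(n-i)/3}$ tends to $\fnk{ccccc}{\frac13,\frac23}{\frac{2-2n-3b}{3},\frac{3-2n-3b}{3}}_{(n-i)/3}$, its equal counts of numerator and denominator $q^3$-factors leaving no residual power of $3$. Finally the explicit $q$-power $q^{\{(4+4n+6\ell-6i)i-n-n^2\}/6}$ and $b^{-(n+2i)/3}=q^{-b(n+2i)}$ both tend to $1$, while the explicit $(-1)^n$ cancels the $(-1)^n$ from the reflected prefactor; the surviving $3^{-n}$ and $(-1)^i$ assemble into the factor $\frac{(-1)^i}{3^n}$, and $\chi(n-i)$ is untouched. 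Collecting these limits reproduces Theorem~\ref{thm-f}.

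The main obstacle will be the sign and power-of-three accounting: one must verify that the two independent $(-1)^n$'s---one from reflecting the prefactor and one written explicitly in Theorem~\ref{thm-e}---cancel, that a single $(-1)^i$ is generated by reflecting $(1-n-i-3b)_i$, and that the lone $3^{-n}$ arises solely from the $q^3$-base prefactor once its $(1-q)^n$ has cancelled against the $q$-base numerator $(q^{1-2n-3b};q)_n$. I would guard against errors by first checking the $\ell=0$ reduction against \eqref{andrews-reversal} and then the $\ell=1,2$ specializations, for which the displayed examples give an immediate consistency test.
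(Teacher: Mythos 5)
Your proposal is correct and follows exactly the paper's route: the paper derives Theorem~\ref{thm-f} by substituting $b\to q^{3b}$ in Theorem~\ref{thm-e} and letting $q\to1$ (the paper even leaves all the limit bookkeeping implicit, which you have carried out correctly, including the cancellation of the two $(-1)^n$'s and the origin of the $3^{-n}$). No substantive difference from the paper's argument.
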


When $\ell=0$, Theorem \ref{thm-e} reduces to
\eqref{andrews-reversal} exactly. Other two examples are laid out as
follows.

\begin{exam}[$\ell=1$ in Theorem \ref{thm-e}: Chen and Chu \citu{chen-b}{Theorem 4}]
\bnm
 &&\xxqdn\xxqdn_3F_2\ffnk{cccc}{\frac{4}{3}}{-n,&\frac{3b}{2},&\frac{1+3b}{2}}{&1+3b,&b-\frac{n-2}{3}}
 \\&&\xxqdn\xxqdn\:=\:
\begin{cases}
\fnk{ccccc}{\frac{1}{3},&\frac{2}{3}}{\frac{1}{3}-b,&\frac{2}{3}+b}_m,&\qqdn
n=3m;
\\[4mm]
\frac{1}{3b+1}\fnk{ccccc}{\frac{2}{3},&\frac{4}{3}}{\frac{2}{3}-b,&\frac{4}{3}+b}_m,&\qqdn
n=1+3m;
\\[4mm]
0,&\qqdn n=2+3m.
\end{cases}
\enm
\end{exam}

\begin{exam}[$\ell=2$ in Theorem \ref{thm-e}: Chen and Chu \citu{chen-b}{Theorem 25}]
\bnm
 &&\xxqdn_3F_2\ffnk{cccc}{\frac{4}{3}}{-n,&\frac{3b}{2},&\frac{1+3b}{2}}{&2+3b,&b-\frac{n-2}{3}}
 \\&&\xxqdn\:=\:
\begin{cases}
\fnk{ccccc}{\frac{1}{3},&\frac{2}{3}}{\frac{1}{3}-b,&\frac{2}{3}+b}_m,&\qqdn
n=3m;
\\[4mm]
\frac{2}{3b+2}\fnk{ccccc}{\frac{2}{3},&\frac{4}{3}}{\frac{2}{3}-b,&\frac{4}{3}+b}_m,&\qqdn
n=1+3m;
\\[4mm]
\frac{2}{3(b+1)(3b+2)}\fnk{ccccc}{\frac{4}{3},&\frac{5}{3}}{1-b,&2+b}_m,&\qqdn
n=2+3m.
\end{cases}
\enm
\end{exam}

Performing the replacement $k\to n-k$ and $a\to q^{1-2n}/b$ in
Theorem \ref{thm-c}, we deduce the following equation.

\begin{thm}\label{thm-g}
For a nonnegative integer $\ell$ and a complex number $b$, there
holds
 \bnm
&&\xqdn\sum_{k=0}^n\frac{1}{(q^{2-n}b;q^3)_k}\ffnk{ccccc}{q}{q^{-n},q^{\ell}\!\sqrt{b},-\sqrt{b},\sqrt{qb},-\sqrt{qb}}
{q,q^{\ell}b}_kq^k=\frac{(q^{1-2n}/b;q)_n}{(q^{1-2n}/b;q^3)_n}
\\&&\xqdn\:\:\times\:\sum_{i=0}^{\ell}(-1)^n\frac{q^{\{(10n+6\ell-2)i-n-n^2\}/6}}{b^{(2n-5i)/3}}
\frac{q-q^{2i}b}{q-b}
\ffnk{ccccc}{q}{q^{-\ell},q^{-n},b/q}{q,q^{\ell}b,q^{n}b}_i
\\&&\xqdn\:\:\times\:
\ffnk{ccccc}{q^3}{q,q^2}{q^{2-2n}/b,q^{3-2n}/b}_{\frac{n-i}{3}}\chi(n-i).
 \enm
\end{thm}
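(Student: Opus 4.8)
The plan is to obtain Theorem \ref{thm-g} from Theorem \ref{thm-c} by reversing the order of summation and then specializing the free parameter, exactly as Theorem \ref{thm-e} was deduced from Theorem \ref{thm-a}. Concretely, I would write the terminating sum on the left of Theorem \ref{thm-c} backwards via $k\to n-k$ and then set $a\to q^{1-2n}/b$. The only tool needed for the reversal is the elementary identity
\[(x;q)_{n-k}=\frac{(x;q)_n}{(-x)^k\,q^{k(n-k)+\binom{k}{2}}\,(q^{1-n}/x;q)_k},\]
together with its base-$q^3$ analogue for the factor $(a;q^3)_k$. Applying this to every $q$-shifted factorial in the summand turns each subscript $n-k$ back into $k$, at the cost of a $k$-independent constant (absorbed into the prefactor) and explicit powers of $q$ and of $x$.

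The first step is to verify that the left-hand sides correspond. Reversing and then setting $a\to q^{1-2n}/b$ carries the four very-well-poised denominator parameters $\sqrt{a},-\sqrt{a},\sqrt{q^{1-2\ell}a},-\sqrt{qa}$ of Theorem \ref{thm-c} into numerator parameters: one evaluates $q^{1-n}/x$ for each and finds, after the substitution, precisely $\sqrt{qb},-\sqrt{qb},q^{\ell}\sqrt{b},-\sqrt{b}$, which are the numerator entries of Theorem \ref{thm-g}. In the same way the numerator factor $q^{n-\ell}a$ produces the denominator parameter $q^{\ell}b$, the factor $(a;q^3)_k$ produces $1/(q^{2-n}b;q^3)_k$, and the quotient $(q^{-n};q)_k/(q;q)_k$ is reproduced from the joint reversal of $(q^{-n};q)_{n-k}$ and $(q;q)_{n-k}$. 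This recovers the summand on the left of Theorem \ref{thm-g}, while the surviving constants $(x;q)_n$ assemble into the prefactor $(q^{1-2n}/b;q)_n/(q^{1-2n}/b;q^3)_n$.

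The second step handles the right-hand side, where no summation index survives, so I would substitute $a\to q^{1-2n}/b$ directly and merge in the constants generated by the reversal of the left side. One checks that the inner factor $\ffnk{ccccc}{q}{q^{-\ell},q^{-n},q^{-2n}/a}{q,q^{1+\ell-2n}/a,q^{1-n}/a}_i$ becomes $\ffnk{ccccc}{q}{q^{-\ell},q^{-n},b/q}{q,q^{\ell}b,q^{n}b}_i$, that the $q^3$-factor $\ffnk{ccccc}{q^3}{q,q^2}{qa,q^2a}_{\frac{n-i}{3}}$ becomes $\ffnk{ccccc}{q^3}{q,q^2}{q^{2-2n}/b,q^{3-2n}/b}_{\frac{n-i}{3}}$, and that the well-poised quotient $\frac{1-q^{2n-2i}a}{1-q^{2n}a}$ turns into $q^{-2i}\frac{q-q^{2i}b}{q-b}$, the stray $q^{-2i}$ being swept into the power of $q$ carried by the $i$-sum.

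The main obstacle---indeed essentially the whole content of the verification---is the bookkeeping of the accumulated powers. Each reversed factorial emits a factor $q^{k(n-k)+\binom{k}{2}}$ and powers of $x$; together with the explicit $a^{(2n-5i)/6}q^{(5/2+\ell)i}$ on the right of Theorem \ref{thm-c} and the $\sqrt{a}$-dependent prefactor $\frac{(q^{\frac{1}{2}+n-\ell}\sqrt{a};q)_\ell}{(q^{\frac{1}{2}-\ell}\sqrt{a};q)_\ell}$, these must collapse to the single sign $(-1)^n$, the $q$-exponent $\{(10n+6\ell-2)i-n-n^2\}/6$, and the power $b^{-(2n-5i)/3}$ listed in Theorem \ref{thm-g}. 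The doubling from the exponent $(2n-5i)/6$ in $a$ to the exponent $(2n-5i)/3$ in $b$, and the half-integer powers created by the $\sqrt{a}$-factors, are precisely where care is needed; I expect them to be governed by the same elementary rearrangements as in the passage from Theorem \ref{thm-a} to Theorem \ref{thm-e}, now with the asymmetric parameter $\sqrt{q^{1-2\ell}a}$ replacing $\sqrt{qa}$.
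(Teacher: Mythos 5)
Your proposal is correct and follows exactly the paper's own route: the paper obtains Theorem \ref{thm-g} by performing the replacement $k\to n-k$ and then $a\to q^{1-2n}/b$ in Theorem \ref{thm-c}, which is precisely your plan, and your bookkeeping via the reversal identity for $(x;q)_{n-k}$ supplies the details the paper leaves implicit.
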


When $\ell=0$, Theorem \ref{thm-g} reduces to
\eqref{q-andrews-reversal} exactly. Other two examples are displayed
as follows.

\begin{exam}[$\ell=1$ in Theorem \ref{thm-g}]
\bnm
 &&\xxqdn\xxqdn\sum_{k=0}^n\frac{1}{(q^{2-n}b;q^3)_k}\ffnk{ccccc}{q}{q^{-n},q\sqrt{b},-\sqrt{b},\sqrt{qb},-\sqrt{qb}}
{q,qb}_kq^{k}
 \\&&\xxqdn\xxqdn\:=\:
\begin{cases}
\ffnk{ccccc}{q^3}{q,q^2}{q/b,q^{2}b}_m,&\qqdn n=3m;
\\[4mm]
\frac{\sqrt{b}(q-1)}{1-qb}
\ffnk{ccccc}{q^3}{q^2,q^4}{q^2/b,q^4b}_m,&\qqdn n=1+3m;
\\[4mm]
0,&\qqdn n=2+3m.
\end{cases}
\enm
\end{exam}

\begin{exam}[$\ell=2$ in Theorem \ref{thm-g}]
\bnm
 &&\xxqdn\xxqdn\sum_{k=0}^n\frac{1}{(q^{2-n}b;q^3)_k}\ffnk{ccccc}{q}{q^{-n},q^2\!\sqrt{b},-\sqrt{b},\sqrt{qb},-\sqrt{qb}}
{q,q^2b}_kq^{k}
 \\&&\xxqdn\xxqdn\:=\:
\begin{cases}
\ffnk{ccccc}{q^3}{q,q^2}{q/b,q^{2}b}_m,&\qqdn n=3m;
\\[4mm]
\frac{\sqrt{b}(q^2-1)}{1-q^2b}
\ffnk{ccccc}{q^3}{q^2,q^4}{q^2/b,q^4b}_m,&\qqdn n=1+3m;
\\[4mm]
\frac{qb(1-q)(1-q^2)}{(1-q^2b)(1-q^3b)}
\ffnk{ccccc}{q^3}{q^4,q^5}{q^3/b,q^6b}_m,&\qqdn n=2+3m.
\end{cases}
\enm
\end{exam}

Employing the substitution $a\to q^{3a}$ in Theorem \ref{thm-g} and
then letting $q\to1$, we obtain the following equation.

\begin{thm}\label{thm-h}
For a nonnegative integer $\ell$ and a complex number $a$, there
holds
 \bnm
&&\qdn\xqdn_3F_2\ffnk{cccc}{\frac{4}{3}}{-n,&\frac{3b}{2}+\ell,&\frac{1+3b}{2}}{&3b+\ell,&b-\frac{n-2}{3}}
=\frac{1}{3^n}\fnk{ccccc}{3b+n}{\frac{1-2n}{3}-b}_{n}
\\&&\qdn\xqdn\:\:\times\:\:\sum_{i=0}^{\ell}\frac{3b-1+2i}{3b-1}\fnk{ccccc}{-\ell,-n,3b-1}{1,3b+n,3b+\ell}_i
\\&&\qdn\xqdn\:\:\times\:\:\fnk{ccccc}{\frac{1}{3},&\frac{2}{3}}{\frac{2-2n-3b}{3},&\frac{3-2n-3b}{3}}_{\frac{n-i}{3}}\chi(n-i).
 \enm
\end{thm}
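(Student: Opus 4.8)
The plan is to derive Theorem~\ref{thm-h} as the $q\to1$ confluence of Theorem~\ref{thm-g}, in complete parallel with the passage from Theorem~\ref{thm-c} to Theorem~\ref{thm-d}. First I would carry out the reparametrization $b\to q^{3b}$ throughout the identity of Theorem~\ref{thm-g}, so that every base becomes a (possibly half-integer) power of $q$ translated by a multiple of $b$: in particular $\sqrt{b}\to q^{3b/2}$, $\sqrt{qb}\to q^{(1+3b)/2}$, $q^{2-n}b\to q^{2-n+3b}$, $q^{\ell}b\to q^{\ell+3b}$, $b/q\to q^{3b-1}$ and $q^{1-2n}/b\to q^{1-2n-3b}$. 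After this substitution both members are finite sums of powers of $q^{1/2}$, $q^{b/2}$ and ratios thereof; since the sums terminate, the limit $q\to1$ may be taken term by term, and the content of the theorem is precisely the evaluation of that limit.

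The engine is the elementary asymptotics as $q\to1$,
\bnm
\lim_{q\to1}\frac{(q^{\alpha};q)_k}{(1-q)^k}=(\alpha)_k,\qquad
\lim_{q\to1}(-q^{\alpha};q)_k=2^k,\\
\lim_{q\to1}\frac{(q^{\alpha};q^3)_k}{(1-q)^k}=3^k\Big(\tfrac{\alpha}{3}\Big)_k,
\enm
together with $q^{c}\to1$ for any fixed power. Applied to the left-hand sum, the three surviving base-$q$ numerator factors $(q^{-n};q)_k,\,(q^{\ell+3b/2};q)_k,\,(q^{(1+3b)/2};q)_k$ and the three denominator factors $(q;q)_k,\,(q^{\ell+3b};q)_k,\,(q^{2-n+3b};q^3)_k$ each contribute one power of $(1-q)^k$, which cancel; the two ``$\pm$'' factors $(-q^{3b/2};q)_k$ and $(-q^{(1+3b)/2};q)_k$ contribute $4^k$, while the base-$q^3$ denominator factor contributes $3^{-k}$ together with the parameter $b-\frac{n-2}{3}$. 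Hence the $k$-th summand tends to $\frac{(-n)_k(\frac{3b}{2}+\ell)_k(\frac{1+3b}{2})_k}{k!\,(3b+\ell)_k(b-\frac{n-2}{3})_k}\,(\tfrac43)^k$, reproducing the left member of Theorem~\ref{thm-h} and, in particular, explaining the appearance of the argument $\tfrac43$.

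On the right I would treat the prefactor, the $i$-sum and the residual base-$q^3$ block separately. The prefactor $\frac{(q^{1-2n-3b};q)_n}{(q^{1-2n-3b};q^3)_n}$ tends to $\frac{(1-2n-3b)_n}{3^n(\frac{1-2n}{3}-b)_n}$; the reflection $(x)_n=(-1)^n(1-x-n)_n$ turns the numerator into $(-1)^n(3b+n)_n$, and the factor $(-1)^n$ carried (independently of $i$) inside the sum of Theorem~\ref{thm-g} cancels this sign, leaving exactly $\frac{1}{3^n}\fnk{ccccc}{3b+n}{\frac{1-2n}{3}-b}_n$. Inside the sum the pure power $q^{\{(10n+6\ell-2)i-n-n^2\}/6}/b^{(2n-5i)/3}$ tends to $1$, the ratio $\frac{q-q^{2i}b}{q-b}$ tends to $\frac{3b-1+2i}{3b-1}$, the finite fraction $\ffnk{ccccc}{q}{q^{-\ell},q^{-n},q^{3b-1}}{q,q^{\ell+3b},q^{n+3b}}_i$ (numerator and denominator each carrying $(1-q)^{3i}$, which cancels) tends to $\fnk{ccccc}{-\ell,-n,3b-1}{1,3b+n,3b+\ell}_i$, and the base-$q^3$ block with $\chi(n-i)$ tends to $\fnk{ccccc}{\frac{1}{3},&\frac{2}{3}}{\frac{2-2n-3b}{3},&\frac{3-2n-3b}{3}}_{\frac{n-i}{3}}\chi(n-i)$. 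Assembling the three blocks produces the right member of Theorem~\ref{thm-h}.

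The computation is mechanical; the only delicate point—and hence the main obstacle—is the bookkeeping of the powers of $1-q$ and of the factor $3$. One must verify that on \emph{each} side the total power of $(1-q)$ vanishes, so that the limit is finite and nonzero, and that the factors of $3$ land where claimed: a single $3^{-k}$ per summand on the left (which combines with the $4^k$ from the two ``$\pm$'' pairs to give the argument $\tfrac43$), and on the right a global $3^{-n}$ from the prefactor, the base-$q^3$ blocks supplying the numerator entries $\frac13,\frac23$ and the denominator entries $\frac{2-2n-3b}{3},\frac{3-2n-3b}{3}$. Once these two balances are checked, Theorem~\ref{thm-h} follows at once from Theorem~\ref{thm-g}.
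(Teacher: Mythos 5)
Your proposal is correct and follows exactly the route the paper takes: Theorem~\ref{thm-h} is obtained by substituting $b\to q^{3b}$ in Theorem~\ref{thm-g} and letting $q\to1$ (the paper merely asserts this in one sentence, whereas you carry out the bookkeeping of the powers of $1-q$, the factors $4^k$ and $3^{-k}$ yielding the argument $\tfrac43$, and the sign reflection $(1-2n-3b)_n=(-1)^n(3b+n)_n$ explicitly). All of these details check out.
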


When $\ell=0$, Theorem \ref{thm-h} reduces to
\eqref{andrews-reversal} exactly. Other two examples are laid out as
follows.

\begin{exam}[$\ell=1$ in Theorem \ref{thm-h}]
\bnm
 &&\xxqdn\xxqdn_3F_2\ffnk{cccc}{\frac{4}{3}}{-n,&\frac{1+3b}{2},&\frac{2+3b}{2}}{&1+3b,&b-\frac{n-2}{3}}
 \\&&\xxqdn\xxqdn\:=\:
\begin{cases}
\fnk{ccccc}{\frac{1}{3},&\frac{2}{3}}{\frac{1}{3}-b,&\frac{2}{3}+b}_m,&\qqdn
n=3m;
\\[4mm]
\frac{-1}{3b+1}\fnk{ccccc}{\frac{2}{3},&\frac{4}{3}}{\frac{2}{3}-b,&\frac{4}{3}+b}_m,&\qqdn
n=1+3m;
\\[4mm]
0,&\qqdn n=2+3m.
\end{cases}
\enm
\end{exam}

\begin{exam}[$\ell=2$ in Theorem \ref{thm-h}]
\bnm
 &&\!\xxqdn_3F_2\ffnk{cccc}{\frac{4}{3}}{-n,&\frac{1+3b}{2},&\frac{4+3b}{2}}{&2+3b,&b-\frac{n-2}{3}}
 \\&&\!\xxqdn\:=\:
\begin{cases}
\fnk{ccccc}{\frac{1}{3},&\frac{2}{3}}{\frac{1}{3}-b,&\frac{2}{3}+b}_m,&\qqdn
n=3m;
\\[4mm]
\frac{-2}{3b+2}\fnk{ccccc}{\frac{2}{3},&\frac{4}{3}}{\frac{2}{3}-b,&\frac{4}{3}+b}_m,&\qqdn
n=1+3m;
\\[4mm]
\frac{2}{3(b+1)(3b+2)}\fnk{ccccc}{\frac{4}{3},&\frac{5}{3}}{1-b,&2+b}_m,&\qqdn
n=2+3m.
\end{cases}
\enm
\end{exam}

Although Theorems \ref{thm-f} and \ref{thm-h} can produce countless
$_3F_2(\frac{4}{3})$-series identities related to
\eqref{andrews-reversal} with the change of $\ell$, many known
results due to Chu and Chen \cite{kn:chen-a,kn:chen-b} can't be
covered by them.

\textbf{Acknowledgments}

The work is supported by the Natural Science Foundations of China
(Nos. 11301120, 11201241 and 11201291).


\end{document}